\documentclass[a4paper,11pt]{amsart}

\usepackage{epsfig}
\usepackage{amsmath,amsthm,amsfonts,amssymb}
\usepackage{graphicx,color}
\usepackage[all]{xy}
\usepackage{enumerate}
\usepackage{verbatim}
\usepackage{hyperref}





\newtheorem{theorem}{Theorem}[section]
\newtheorem*{theorem*}{Theorem}
\newtheorem*{theorem_A}{Theorem A}
\newtheorem*{theorem_B}{Theorem B}
\newtheorem*{theorem_C}{Theorem C}
\newtheorem{lemma}[theorem]{Lemma}
\newtheorem{corollary}[theorem]{Corollary}
\newtheorem{proposition}[theorem]{Proposition}
\newtheorem{definition}[theorem]{Definition}

\newtheorem{conjecture}[theorem]{Conjecture}


\DeclareMathOperator{\Div}{div}

\DeclareMathOperator{\ric}{Ric}



\begin{document}

\title[Bounds to the mean curvature of leaves of CMC foliations]{Bounds to the mean curvature of leaves of CMC foliations}

\author[J. E. Sampaio]{Jos\'e Edson Sampaio}
\author[E. C. da Silva]{Euripedes Carvalho da Silva}

\address{Jos\'e Edson Sampaio:  
	Departamento de Matem\'atica, Universidade Federal do Cear\'a,
	Rua Campus do Pici, s/n, Bloco 914, Pici, 60440-900, 
	Fortaleza-CE, Brazil. \newline  
	E-mail: {\tt edsonsampaio@mat.ufc.br}                    
}
\address{Euripedes Carvalho da Silva: Departamento de Matem\'atica, Instituto Federal de Educa\c{c}\~ao, Ci\^encia e Tecnologia do Cear\'a,
 	      Av. Parque Central, 1315, Distrito Industrial I, 61939-140, 
 	      Maracana\'u-CE, Brazil. \newline 
 	      and 	      Departamento de Matem\'atica, Universidade Federal do Cear\'a,
	      Rua Campus do Pici, s/n, Bloco 914, Pici, 60440-900, 
	      Fortaleza-CE, Brazil.
               E-mail: {\tt euripedes.carvalho@ifce.edu.br}
 } 

\thanks{The second-named author was partially supported by CNPq-Brazil grant 310438/2021-7. 
This work was supported by the Serrapilheira Institute (grant number Serra -- R-2110-39576).}

\keywords{Foliations, Minimal hypersurface, Stable  hypersurface}
\subjclass[2010]{53C12, 53C42}

\begin{abstract}
The main goal of this present paper is to bring the results  proved by Barbosa, Kenmotsu and Oshikiri (1991) and its ideas to a perspective where the Ricci curvature is bounded from below. For instance, for a foliation by CMC hypersurfaces on a compact (without boundary) Riemannian manifold $M^{n+1}$ with Ricci curvature bounded from below by $-nK_0\leq 0$ and such that the mean curvature $H$ of the leaves of the foliation satisfies $|H|\geq \sqrt{K_0}$, we prove that $|H|\equiv \sqrt{K_0}$ and all the leaves are totally umbilical. This gives, in particular, a generalization for the result proved by Barbosa, Kenmotsu and Oshikiri (1991), where the above result was proved in the case $K_0=0$. We also obtain a proof of the following: for a foliation by CMC hypersurfaces on a compact (without boundary) Riemannian manifold $M$ with Ricci curvature bounded from below by $-nK_0\leq 0$, the mean curvature $H$ of the leaves of the foliation satisfies $|H|\leq \sqrt{K_0}$. Furthermore, if the foliation contains a leaf $L$ whose absolute mean curvature is $|H_L|=\sqrt{K_0}$, then either $K_0=0$ and all the leaves of $\mathfrak{F}$ are totally geodesic, or $K_0>0$ and there is a totally umbilical leaf. 
\end{abstract}
\maketitle
\vspace{-0.5cm}
\tableofcontents
\section{Introduction}

In 1991, in the very interesting article \cite{BarbosaKO-1991}, Barbosa, Kenmotsu and Oshikiri proved the following results:

\begin{theorem}[Theorem 3.1 in \cite{BarbosaKO-1991}]
Let $\mathfrak{F}$ be a codimension one CMC foliation of a compact (without boundary) Riemannian manifold $M^{n+1}$ with non-negative Ricci curvature. Then any leaf of $\mathfrak{F}$ is a totally geodesic submanifold of $M$. Furthermore, $M$ is locally a Riemannian product of a leaf of $\mathfrak{F}$ and a normal curve, and the Ricci curvature in the direction normal to the leaves is zero.
\end{theorem}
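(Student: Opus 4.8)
The plan is to work with a (locally defined, after passing to a transversely orientable double cover if needed) unit normal field $N$ to $\mathfrak{F}$, its shape operator $A$, and the mean curvature $H=\tr A$; the CMC hypothesis means precisely that $H$ is constant along each leaf, so that $\grad H$ is everywhere parallel to $N$. The first task is to record the trace Riccati identity in the normal direction. A direct computation (commuting covariant derivatives and using $\Div N=-H$) gives
\[
N(H)=\|A\|^2+\ric(N,N)-\Div(\nabla_N N),
\]
where $\nabla_N N$ is the acceleration of the normal curves, which is tangent to the leaves. I expect this identity, rather than a Riccati ODE along genuine geodesics, to be the right vehicle, because the normal curves of a CMC foliation are in general not geodesics.

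The heart of the argument is to integrate a weighted version of this identity over the compact manifold $M$. For any smooth $\phi$, the divergence theorem applied to $\phi(H)N$ yields $\int_M\phi'(H)N(H)\,dV=\int_M H\phi(H)\,dV$ (again using $\Div N=-H$). Substituting the Riccati identity, the troublesome term $\int_M\phi'(H)\Div(\nabla_N N)\,dV$ integrates by parts to $-\int_M\phi''(H)\langle\grad H,\nabla_N N\rangle\,dV$, and this vanishes identically: $\grad H$ is parallel to $N$ while $\nabla_N N$ is orthogonal to $N$. This is exactly where the CMC hypothesis does its work, and it leaves the clean identity
\[
\int_M\phi'(H)\big(\|A\|^2+\ric(N,N)\big)\,dV=\int_M H\phi(H)\,dV.
\]
Choosing $\phi(H)=H^{2k+1}$ with $2k+1>n$, the Cauchy--Schwarz bound $\|A\|^2\ge H^2/n$ together with $\ric(N,N)\ge 0$ forces $\tfrac{2k+1}{n}\int_M H^{2k+2}\,dV\le\int_M H^{2k+2}\,dV$, whence $H\equiv 0$. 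Feeding $H\equiv 0$ back with $\phi(H)=H$ gives $\int_M(\|A\|^2+\ric(N,N))\,dV=0$, and since both integrands are nonnegative we conclude $A\equiv 0$ (every leaf is totally geodesic) and $\ric(N,N)\equiv 0$. Note that this part uses only the compactness of $M$, not of the individual leaves.

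It remains to upgrade "totally geodesic" to the local product structure. Since $A\equiv 0$, we have $\nabla_X N=0$ for every $X$ tangent to the leaves, so $N$ fails to be parallel only through $\tau:=\nabla_N N$. One checks that $\tau^\flat$ is closed along each leaf, so locally $\tau=\grad_L\psi$; using $A\equiv 0$, the condition $\ric(N,N)=0$ becomes $\Delta_L\psi=|\grad_L\psi|^2$, i.e. $e^{-\psi}$ is harmonic on each leaf. Because the leaves are totally geodesic in the complete manifold $M$ they are themselves complete, so $e^{-\psi}$ is a positive harmonic function on a complete leaf, and a Liouville-type argument exploiting the compactness of $M$ forces $\psi$ to be constant; hence $\tau\equiv 0$ and $N$ is parallel. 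De Rham's splitting theorem then produces the local product of a leaf with a normal curve, and a parallel normal field automatically has $\ric(N,\cdot)=0$. The main obstacle I anticipate is precisely this last step: leaves of a foliation on a compact manifold may be non-compact (even dense), so the vanishing of $\tau$ cannot be obtained by naively integrating $\Delta_L\psi=|\grad_L\psi|^2$ over a leaf and must instead be extracted by a Liouville/maximum-principle argument built on the compactness of the ambient $M$.
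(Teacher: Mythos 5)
Your first stage --- deducing $H\equiv 0$, $A\equiv 0$ and $\ric(N)\equiv 0$ --- is correct, and it takes a genuinely different route from the paper. The paper obtains this part from its Theorem B with $K_0=0$, whose proof fixes a leaf maximizing $|H|$, splits into cases according to whether $H$ is globally constant, and in the non-constant case invokes Proposition 2.1 of \cite{BarbosaKO-1991} to produce a \emph{compact} leaf on which the divergence theorem can be applied. Your weighted identity
\[
\int_M\phi'(H)\bigl(\|A\|^2+\ric(N)\bigr)\,dV=\int_M H\,\phi(H)\,dV,
\]
obtained by applying the divergence theorem to $\phi(H)N$ and killing $\int_M\phi'(H)\Div(\nabla_NN)\,dV$ through the orthogonality of $\grad H$ (parallel to $N$, by the CMC hypothesis) and $\nabla_NN$ (tangent to the leaves), combined with the test functions $\phi(t)=t^{2k+1}$, $2k+1>n$, and $\|A\|^2\geq H^2/n$, bypasses the compact-leaf machinery entirely: it uses only compactness of the ambient manifold, not the existence or location of compact leaves. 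That is a real simplification of this portion of the argument, and the computations check out.

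The genuine gap is in the second stage, the local product structure, i.e.\ the vanishing of $\tau=\nabla_NN$. What you offer there is a sketch: locally $\tau=\grad_L\psi$, $e^{-\psi}$ is leafwise harmonic, and ``a Liouville-type argument exploiting the compactness of $M$'' should force $\psi$ to be constant. This does not close, for two reasons. First, $\tau^\flat$ restricted to a leaf is closed but need not be exact, so $\psi$ need not exist globally on a (possibly non-simply-connected, possibly dense) leaf. Second, even granting a global $\psi$, positive harmonic functions on complete manifolds are not constant in general: Yau's Liouville theorem needs $\ric_L\geq 0$, and a totally geodesic leaf of a manifold with $\ric_M\geq 0$ need not satisfy this, since by Gauss $\ric_L(X,X)=\ric_M(X,X)-\langle R(X,N)N,X\rangle$ and only the trace $\sum_i\langle R(e_i,N)N,e_i\rangle=\ric(N)=0$ is controlled, not the individual sectional curvatures $K(X,N)$. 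Compactness of $M$ gives only a bound on $\|\grad_L\psi\|$, which is insufficient (hyperbolic space carries nonconstant positive harmonic functions with bounded logarithmic gradient). So you have correctly located the difficulty but not resolved it. Notably, the paper does not reprove this step either: it passes from the conclusion of Theorem B (totally geodesic leaves, $\ric(N)\equiv 0$, $\Div(\nabla_NN)\equiv 0$) to the local splitting by citing the results of Oshikiri \cite{Oshikiri-1981} for foliations of closed manifolds. To complete your proof you would need to reproduce or cite an argument of that foliation-theoretic type; the leafwise Liouville strategy, as stated, fails.
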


\begin{theorem}[Theorem 3.8 in \cite{BarbosaKO-1991}]
Let $\mathfrak{F}$ be a transversely oriented foliation of $Q^{n+1}(c)$ such that
each leaf $L$ has constant mean curvature $H_L$, $Q^{n+1}(c)$ represents a $(n+1)$-dimensional complete Riemannian manifold with constant sectional curvature $c$. Assume $c\leq 0$ and $|H_L|\geq \sqrt{-c}$. Then, $\inf\limits_{p\in M}|H(p)|= \sqrt{-c}$.
\end{theorem}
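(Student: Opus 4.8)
The plan is to argue by contradiction, converting the curvature hypothesis into a Riccati differential inequality along normal geodesics and extracting a focal point that a genuine foliation cannot tolerate. Suppose that $\inf_{p\in M}|H(p)|=a>\sqrt{-c}$. Since $M=Q^{n+1}(c)$ is connected and $|H|\geq a>0$, the mean curvature never vanishes, so after fixing the transverse orientation the mean curvature vector of every leaf points consistently to one side, which I call the concave side; thus I may assume $H\geq a$ with respect to the global unit normal $N$.

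First I would record the pointwise Riccati equation. Fix a leaf $L$ and a point $p\in L$, and let $\gamma$ be the unit-speed geodesic issuing from $p$ into the concave side. Writing $\lambda_1(s),\dots,\lambda_n(s)$ for the principal curvatures of the equidistant hypersurfaces $L^{s}=\{x:\ \mathrm{dist}(x,L)=s\}$ along $\gamma$, each obeys $\lambda_i'=\lambda_i^2+c$ in this parametrization, because $Q^{n+1}(c)$ has constant curvature $c$ and the curvature operator acts as $c\,\mathrm{Id}$ on normal Jacobi fields. Summing and using the Cauchy--Schwarz inequality $\sum_i\lambda_i^2\geq \tfrac1n(\sum_i\lambda_i)^2=nH^2$ yields the key inequality
\begin{equation*}
H'(s)\ \geq\ H(s)^2+c,\qquad H(0)=H_L\geq a .
\end{equation*}
Comparing with the model ODE $h'=h^2+c$, $h(0)=a$, whose solution is $h(s)=\sqrt{-c}\,\coth\!\big(\sqrt{-c}\,(s_0-s)\big)$ for a suitable $s_0>0$ (with the obvious limiting form when $c=0$), I conclude that $H(s)\to+\infty$ at some distance $d_L\leq s_1:=\int_a^\infty\frac{dy}{y^2+c}<\infty$, a bound \emph{independent} of the leaf. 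Geometrically this says that every leaf develops a focal point on its concave side at uniformly bounded distance $s_1$, and completeness of $Q^{n+1}(c)$ guarantees that the geodesic actually reaches this focal point inside $M$.

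The final and hardest step is to turn focalization into a contradiction with $\mathfrak{F}$ being a disjoint, space-filling family of CMC hypersurfaces. The plan is to compare, via the strong maximum principle, the smooth signed distance function $\rho(x)=\mathrm{dist}(x,L_0)$ to a fixed leaf $L_0$ --- whose level sets are the equidistants $L_0^{s}$ and whose Laplacian $\Delta\rho=nH_{L_0^{s}}$ blows up as $s\uparrow d_{L_0}$ --- against the leaf $L_{q_0}$ passing through the focal point $q_0=\gamma(d_{L_0})$. Because $L_{q_0}$ again satisfies $|H|\geq a$ and is smooth with \emph{finite} mean curvature at $q_0$, while the equidistants $L_0^{s}$ become arbitrarily curved as $s\uparrow d_{L_0}$, a first-contact/tangency argument forces one of the foliation's leaves to attain $|H|\leq\sqrt{-c}$ at the contact point, contradicting $|H|\geq a$.

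I expect the main obstacle to be precisely this last comparison. The geodesics realizing the focal point of $L_0$ are \emph{not} orthogonal to the nearby leaves, so the shape operators of the equidistant family and of $\mathfrak{F}$ are not directly comparable along $\gamma$; equivalently, the acceleration term $\nabla_N N$ (the holonomy of $\mathfrak{F}$ along $\gamma$) obstructs a naive pointwise comparison. I would circumvent this by running the maximum principle only at a first contact point between a shrinking equidistant $L_0^{s}$ and the leaf $L_{q_0}$, where the two tangent spaces necessarily coincide and the classical tangency principle for mean curvature applies, rather than attempting to compare the two foliations along the whole of $\gamma$. Isolating and controlling this contact --- and verifying that the contact genuinely occurs before the focal distance $s_1$ --- is the crux of the proof.
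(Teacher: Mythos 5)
Your first half is sound: in $Q^{n+1}(c)$ the shape operator of the parallel (equidistant) hypersurfaces satisfies the Riccati equation along normal geodesics; tracing it and applying Cauchy--Schwarz gives $H'(s)\geq H(s)^2+c$, and under the contradiction hypothesis $\inf_{p\in M}|H(p)|=a>\sqrt{-c}$ this forces a focal point on the concave side of every leaf at distance at most $s_1=\int_a^\infty \frac{dy}{y^2+c}<\infty$, uniformly in the leaf. (Two cautions: you should work with the immersed parallel hypersurfaces $x\mapsto\exp_x(sN(x))$ rather than with level sets of $\mathrm{dist}(\cdot,L_0)$, since $Q^{n+1}(c)$ may be a compact or otherwise non-simply-connected quotient, where cut points can occur before focal points and the distance function is not smooth; and the claim that each individual principal curvature obeys the scalar ODE is a constant-curvature artifact --- the trace inequality is all you need and all that survives in greater generality.)

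The genuine gap is the final step, and it is not merely technical: the first-contact/tangency mechanism you describe does not yield a contradiction. At a first contact point $z$ between the advancing equidistant $L_0^{s^*}$ and the fixed leaf $L_{q_0}$, the leaf lies locally on the side toward which the equidistants advance, so the tangency principle gives $H_{L_{q_0}}(z)\geq H_{L_0^{s^*}}(z)$, both computed with respect to the equidistant's normal. But (i) nothing forces $z$ to lie near the geodesic $\gamma$: the Riccati blow-up is localized at $\gamma(s)$, and away from $\gamma$ the comparison only gives $H_{L_0^{s^*}}(z)\geq a$; and (ii) when the leaf's normal at $z$ is aligned with the equidistant's normal --- the generic situation, since all objects in sight curve the same way --- the resulting inequality $H_{L_{q_0}}\geq a$ is exactly your standing assumption, not a contradiction. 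A contradiction would follow only in the anti-aligned case, which nothing in the setup forces. Note also that focal points of leaves are not per se contradictory: the concentric geodesic spheres in $\mathbb{H}^{n+1}$ foliate the punctured space, every leaf has $H>1$, and every leaf has a focal point at the deleted center; what must be exploited is that $\mathfrak{F}$ has a smooth, regular leaf through $q_0$, and your sketch never isolates how that regularity is violated. The paper avoids all of this: it proves (Theorem A, via Lemma 3.2) that the flow $\theta_t$ of the unit normal satisfies $\mathrm{vol}_M(\theta_t(B))\geq \mathrm{vol}_M(B)e^{nat}$ while $\theta_t(B)\subset B(p,r+t)$, so the lower volume entropy of $M$ is at least $na$; on the other hand, Bishop--Gromov (here, exact space-form volumes) bounds that entropy above by $n\sqrt{-c}$, whence $a\leq\sqrt{-c}$. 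That argument is global, requires no tangency or focal analysis, and is what lets the paper replace constant curvature by a Ricci lower bound.
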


The main goal of this present paper is to bring the results in \cite{BarbosaKO-1991} and its ideas
to a perspective where the Ricci curvature is bounded from below, besides those already presented in \cite{BarbosaBM-2008}. In particular, we obtain the following results.

\begin{theorem_A}
Let $\mathfrak{F}$ be a codimension one CMC foliation of a complete oriented Riemannian manifold $M^{n+1}$ with Ricci curvature bounded from below by $-nK_0\leq 0$. Suppose that the mean curvature function $H\colon M\rightarrow \mathbb{R}$, which associates to each point the value of the mean curvature of the leaf of $\mathfrak{F}$ that contains that point, does not change sign on $M$. Then, $\sqrt{K_0}\geq \inf\limits_{p\in M}|H(p)|$.
\end{theorem_A}

\begin{theorem_B}
Let $\mathfrak{F}$ be a codimension one foliation of a compact (without boundary) Riemannian manifold $M^{n+1}$. Assume that there is $K_0\geq 0$ such that ${\rm Ric}\geq -nK_0$. Suppose that each leaf $L$ of $\mathfrak{F}$ has constant mean curvature $H_L$ such that $|H_L|\geq \sqrt{K_0}$. Then $|H_L|\equiv \sqrt{K_0}$ for any leaf $L$ of $\mathfrak{F}$ and each leaf of $\mathfrak{F}$ is totally umbilical. 
\end{theorem_B}

By using the results in \cite{Oshikiri-1981}, we recover the full statement of Theorem 3.1 in \cite{BarbosaKO-1991}.

In 2008, Meeks III, P\'erez and Ros in \cite[Conjecture 5.1.2]{MeeksPR:2008} proposed the following conjecture:
\begin{conjecture}\label{conjecture:control_mean_curv}
Let $\mathfrak{F}$ be a codimension one foliation of a complete Riemannian manifold $M^{n+1}$. Assume that $M$ has absolute sectional curvature bounded from above by $1$. Suppose that each leaf $L$ of $\mathfrak{F}$ has constant mean curvature $H_L$.
Then $|H_L|\leq 1$.
\end{conjecture}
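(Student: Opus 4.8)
The plan is to reduce the conjecture to the Ricci–curvature framework of Theorems A and B, prove it in the compact case (which is as far as the present circle of ideas reaches), and isolate the non-compact complete case as the expected obstruction. First I would observe that the hypothesis that the absolute value of the sectional curvature is bounded by $1$ forces $\ric\ge -n$, since $\ric(v,v)$ is a sum of $n$ sectional curvatures, each $\ge -1$; thus we are in the setting $\ric\ge -nK_0$ with $K_0=1$ and $\sqrt{K_0}=1$, and the desired conclusion $|H_L|\le 1$ is exactly the pointwise bound $|H|\le\sqrt{K_0}$. Whereas Theorem A concerns $\inf_M|H|$, the conjecture requires controlling $\sup_M|H|$, which is the real content here. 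Passing to the transversely oriented double cover if necessary, I would fix a global unit normal $N$ to $\mathfrak F$, write $H=\frac1n\Div N$ for the signed mean curvature function, let $A$ denote the shape operator of the leaves, and record the pointwise Riccati identity
\[
n\,N(H)=\Div(\nabla_N N)-|A|^2-\ric(N,N),
\]
which follows from the Ricci commutation formula together with the facts that $\nabla N$ has image in $T\mathfrak F$ (as $\langle\nabla_X N,N\rangle=0$) and that $\tr\bigl((\nabla N)^2\bigr)=|A|^2$.

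The key step is to convert this first–order identity into a second–order elliptic equation along the leaves. Writing $\mathfrak F$ locally as the level sets of a function $f$ with $N=\grad f/|\grad f|$ and setting $\varphi=|\grad f|^{-1}$, the positive lapse measuring the transverse separation of leaves, one has $\nabla_N N=\grad_L\log(1/\varphi)$, and substituting into the identity above yields the Jacobi-type equation
\[
\Delta_L\varphi+\bigl(|A|^2+\ric(N,N)+n\,N(H)\bigr)\varphi=0
\]
on each leaf $L$, with $\varphi>0$. This is the linearized–mean–curvature equation for the deformation of $\mathfrak F$ through its CMC leaves, and it is the exact analogue of the stability operator used in \cite{BarbosaKO-1991}.

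To finish in the compact case I would choose a leaf $L$ realizing $\max_M H$; such a leaf exists because $M$ is compact and $H$ is continuous and constant on leaves. Since $H$ attains an interior maximum in the transverse direction along $L$, its normal derivative vanishes there, $N(H)\equiv 0$ on $L$, so the Jacobi equation reduces to $\Delta_L\varphi+(|A|^2+\ric(N,N))\varphi=0$. Assuming $L$ is compact and integrating over it (using $\int_L\Delta_L\varphi=0$) gives $\int_L(|A|^2+\ric(N,N))\varphi=0$; but $\varphi>0$ and, by Cauchy–Schwarz ($|A|^2\ge nH_L^2$) and the curvature bound, $|A|^2+\ric(N,N)\ge n(H_L^2-K_0)$, so $H_L^2\le K_0$, i.e.\ $\max_M H\le\sqrt{K_0}$. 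Applying the same argument to the minimizing leaf (equivalently, to $-N$) gives $\inf_M H\ge-\sqrt{K_0}$, hence $|H|\le\sqrt{K_0}=1$ everywhere. Moreover equality propagates: where $H_L^2=K_0$ the vanishing integral forces $|A|^2=nH_L^2$ pointwise, so the extremal leaves are totally umbilical, which is precisely the mechanism behind Theorem B.

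The step I expect to be the main obstacle is the passage to the full complete, possibly non-compact conjecture, where two difficulties appear. First, $\sup_M|H|$ need not be attained, so the clean maximum-leaf argument is unavailable; the natural remedy is the Omori–Yau maximum principle, applicable precisely because $\ric\ge -n$, producing an almost-maximizing sequence $p_k$ with $N(H)(p_k)\to 0$ along which one hopes to pass to the limit in the Riccati identity. Second, and more seriously, the integration by parts that annihilates $\Delta_L\varphi$ requires the relevant leaf (or its closure, a compact saturated set) to be compact, which fails in general even when $M$ is compact; controlling $\Div(\nabla_N N)$, equivalently the leafwise Laplacian of the lapse, along non-compact leaves without this cancellation is the crux, and is, I believe, the reason the Meeks–P\'erez–Ros conjecture remains open beyond the compact setting.
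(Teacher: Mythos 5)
Two points, one about the status of the statement and one about a concrete gap in your argument.

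First, the statement you were asked to prove is an \emph{open conjecture} (it is \cite[Conjecture 5.1.2]{MeeksPR:2008}, quoted by the paper); the paper does not prove it, and neither do you --- your proposal, as you yourself say, only addresses the case where the ambient manifold $M$ is compact. That compact case is exactly the scope of the paper's Theorem C, stated there under the weaker hypothesis $\ric \geq -nK_0$; your reduction from $|K|\leq 1$ to $\ric \geq -n$ (i.e.\ $K_0=1$) is correct and is precisely how the paper passes from Conjecture 1 to its Conjecture 2. Within the compact case, your mechanism coincides with the paper's: the identity $\Div_L(\nabla_N N) = -nN(H)+\left\|\mathrm{I\!I}\right\|^2+\ric(N)+\|\nabla_N N\|^2$, a leaf maximizing $H$, the vanishing of $N(H)$ on it, the divergence theorem, and the trace inequality $\left\|\mathrm{I\!I}\right\|^2\geq nH^2$. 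One cosmetic caveat: your lapse function $\varphi$ need not exist globally along a leaf with non-trivial holonomy, so it is cleaner to integrate $\Div_L(\nabla_N N)$ directly (as the paper does); $\nabla_N N$ is always a globally defined tangent field on the leaf, so nothing is lost.

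Second, the genuine gap: you integrate over the maximizing leaf $L$ ``assuming $L$ is compact,'' and in your closing paragraph you claim this compactness ``fails in general even when $M$ is compact'' and is the reason the conjecture is open. That diagnosis is wrong, and it is exactly the point where the paper goes further than you do. The paper splits into two cases. If $H$ is constant on $M$, then $N(H)\equiv 0$ everywhere, so $\Div(\nabla_N N)=\left\|\mathrm{I\!I}\right\|^2+\ric(N)$ on all of $M$, and one applies the divergence theorem on the closed manifold $M$ itself --- no compact leaf is needed. If $H$ is not constant, then Proposition 2.1 of Barbosa--Kenmotsu--Oshikiri \cite{BarbosaKO-1991} guarantees that the extremal leaf may be taken to be compact; this citation is the single ingredient your argument is missing, and with it your compact-case proof closes. (In the paper's Theorem C, the non-constancy of $H$ in the contradiction argument is supplied by invoking Theorem B.) For non-compact complete $M$ the conjecture is indeed open --- there the real difficulty is that $\sup_M|H|$ need not be attained and there is no compact object, ambient or leaf, over which to integrate --- so your honest assessment of the overall status is accurate even though your explanation of the obstruction in the compact setting is not.
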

Meeks III, P\'erez and Ros in \cite[Corollary 5.10]{MeeksPR:2008} proved Conjecture \ref{conjecture:control_mean_curv} has a positive answer in the case that $M=\tilde M^3(-1)$. They also proved in \cite[Theorem 5.23]{MeeksPR:2008} that when $M$ is a homogeneously regular manifold with absolute sectional bounded from above by 1 and $n=3$ or $4$, the absolute mean curvature of any leaf of a codimension one CMC foliation of $M$ is bounded by some constant $H_n$ that only depends on $n$.

Here, we consider the following more general conjecture:
\begin{conjecture}\label{conjecture:control_mean_curv_general}
Let $\mathfrak{F}$ be a codimension one foliation of a complete Riemannian manifold $M^{n+1}$. Assume that there is $K_0\geq 0$ such that $M$ has Ricci curvature bounded from below by $-nK_0$. Suppose that each leaf $L$ of $\mathfrak{F}$ has constant mean curvature $H_L$.
Then $|H_L|\leq \sqrt{K_0}$.
\end{conjecture}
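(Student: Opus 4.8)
The plan is to argue by contradiction through a \emph{pointwise} comparison that uses completeness rather than compactness, so that the noncompact case is handled on the same footing as the compact one; in particular I will not assume that the mean curvature function attains a maximum. Passing to the transversely orientable double cover if necessary (this changes neither the bound $\mathrm{Ric}\geq -nK_0$ nor the CMC condition, both being local), fix a global unit normal field $N$ to $\mathfrak{F}$, so that $nH=\operatorname{div} N$ up to the usual sign convention. Suppose, contrary to the claim, that some leaf $L_0$ satisfies $|H_{L_0}|>\sqrt{K_0}$, and write $h=|H_{L_0}|$. Choosing the orientation of $N$ so that the mean curvature vector of $L_0$ points in the $-N$ direction, the goal becomes to show that flowing $L_0$ in the direction of its mean curvature vector produces a focal point at finite distance, a configuration the foliation cannot accommodate.

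The first step is a Riccati comparison along the normal geodesics of $L_0$. For $p\in L_0$ let $\gamma(t)=\exp_p(-tN(p))$, let $\nu=\gamma'$, and let $\Sigma_t$ denote the hypersurface obtained by flowing $L_0$ a distance $t$ in the $-N$ direction; these are smooth for $t$ below the first focal distance. Writing $S(t)$ for the shape operator of $\Sigma_t$ along $\gamma$ with respect to $\nu$, and $\theta(t)=\tr S(t)$, the Riccati equation $\nabla_{\gamma'}S=-S^2-R(\cdot,\gamma')\gamma'$, together with the Cauchy--Schwarz bound $\tr(S^2)\geq \theta^2/n$ and the hypothesis $\mathrm{Ric}\geq -nK_0$, yields
\[
\theta'(t)\leq -\frac{\theta(t)^2}{n}+nK_0 .
\]
With the chosen orientation $\theta(0)=-nh<-n\sqrt{K_0}$, so the right-hand side is strictly negative and dominated by $-\theta^2/n$; comparison with the corresponding Riccati ODE of the space form of constant curvature $-K_0$ forces $\theta(t)\to-\infty$ at some finite distance $t_0<\infty$, that is, a focal point of $L_0$ at distance $t_0$. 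Since $M$ is complete, Hopf--Rinow guarantees that $\gamma$ is defined on $[0,t_0]$, so this focal point is genuinely realized in $M$. This is precisely where completeness, rather than compactness, is doing the work, and it is the point at which the present approach departs from the compact argument.

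The decisive step is to convert the focal point into a contradiction with the foliation. Here the idea is to use the leaves of $\mathfrak{F}$ on the $-N$ side of $L_0$ as barriers and to invoke the tangency (maximum) principle for the constant mean curvature operator, in the spirit of the comparison arguments of \cite{BarbosaBM-2008}. As $t\uparrow t_0$ the hypersurface $\Sigma_t$ sweeps across the foliated region while $|\theta(t)|\to\infty$, and one wants to show that before the focal point is reached $\Sigma_t$ must become internally tangent to some leaf $L'$, with $L'$ lying on the concave side of $\Sigma_t$. At such a first contact the tangency principle gives an inequality forcing $|H_{L'}|\geq |H_{\Sigma_t}|$, and letting $t\uparrow t_0$ would force $|H_{L'}|=\infty$, impossible since $L'$ is a CMC leaf with finite mean curvature. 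This contradiction would establish $|H_L|\leq\sqrt{K_0}$ for every leaf.

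The hard part --- and the reason the statement remains a conjecture --- is exactly this last step in the noncompact setting. Because $\mathfrak{F}$ need not be a Riemannian foliation, the geodesic $\gamma$ drifts off the leaves for $t>0$: the normal of $\Sigma_t$ at $\gamma(t)$ is $\gamma'(t)$, which is generally not proportional to $N(\gamma(t))$, so $\Sigma_t$ is not tangent to the leaf through $\gamma(t)$ and need not remain graphical over a single plaque. One must therefore control the global behaviour of $\Sigma_t$, produce a first tangency with some leaf before distance $t_0$, and rule out both that the focal point occurs in ``free space'' before any tangency and that leaves accumulate along $\gamma$. Making this first-contact argument rigorous while assuming only completeness is the central obstacle; a successful treatment would presumably combine the Riccati estimate above with the one-sided bound of Theorem~A to localize the region in which a tangency is forced.
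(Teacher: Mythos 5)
Your proposal is not a proof, and you correctly say so yourself: the final ``first contact'' step is missing, and that is precisely the step that makes the statement a conjecture. It is worth being explicit about how this relates to the paper. The paper never proves the statement as quoted (complete $M$); it proves only the compact case, as Theorem C, by a mechanism entirely different from yours: since $M$ is compact, $|H|$ attains its maximum on some leaf $L$; if $|H_L|>\sqrt{K_0}$, then Theorem B supplies a leaf with smaller mean curvature, so $H$ is nonconstant, and Proposition 2.1 of Barbosa--Kenmotsu--Oshikiri then lets one take $L$ compact; since $N(H)=0$ on $L$, integrating $\operatorname{div}_L(\nabla_N N)=\|\mathrm{I\!I}\|^2+\operatorname{Ric}(N)+\|\nabla_N N\|^2$ over the compact leaf gives $0\geq n(H_L^2-K_0)\operatorname{vol}(L)>0$, a contradiction. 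No flow, no comparison geometry: the whole argument is the divergence theorem on a compact leaf, which is why it does not extend to the complete case.

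As for your strategy, the Riccati step is sound: with your orientation conventions, $\theta'\leq-\theta^2/n+nK_0$ and $\theta(0)=-nh<-n\sqrt{K_0}$ do force $\theta\to-\infty$ at finite distance $t_0$, and completeness realizes the focal point. But a focal point of the equidistant flow of $L_0$ is, by itself, no contradiction with anything --- geodesic spheres in hyperbolic space are CMC hypersurfaces with $|H|>\sqrt{K_0}$ and have focal points; the contradiction must come from the foliation, and that is exactly where the argument breaks. Concretely: (i) $\Sigma_t$ is not a leaf and its normal drifts away from $N$, so there is no reason a \emph{first} tangency with a leaf occurs at all --- leaves of a foliation of a complete manifold can be dense or non-proper, so the set of contact times need not be closed or have a meaningful infimum with one-sided contact; (ii) the curvature blow-up is localized along the single geodesic $\gamma$, so even if a tangency occurs it may happen in a region where $\Sigma_t$ has bounded principal curvatures, and the tangency principle then yields only a finite, useless inequality rather than $|H_{L'}|=\infty$; (iii) the tangency principle requires matched orientations and genuine one-sided contact, neither of which your construction controls. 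Your suggestion of invoking Theorem~A does not close the gap either, since Theorem~A needs $H$ not to change sign on all of $M$, which is not available here. So the verdict is: an honest and correctly diagnosed partial strategy, but with a genuine, named gap --- the conversion of the finite-distance focal point into a foliation contradiction --- and, unlike the compact case, no argument in the paper fills it.
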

Barbosa, Kenmotsu and Oshikiri in \cite{BarbosaKO-1991} gave a positive answer to this conjecture when $M$ is a compact (without boundary) manifold and $K_0=0$.
This conjecture was positively answered by Meeks III, P\'erez and Ros in \cite[Theorem 5.8.C]{MeeksPR:2008} when $M$ is a compact (without boundary) orientable $3$-manifold, which is not topologically covered by $\mathbb{S}^2\times \mathbb{S}^1$. Indeed, in this case they prove even more, instead of the assumption that the Ricci curvature is bounded from below by $-2K_0$, they only need that the scalar curvature of $M$ is bounded from below by $-6 K_0$.
For any dimension, but still when $M$ is a compact (without boundary) manifold, a positive answer to Conjecture \ref{conjecture:control_mean_curv_general} follows from the Structure Theorem for CMC Foliations proved by Meeks III and P\'erez in \cite{MeeksPR:2016b} (see \cite[Remark 1.4.i]{MeeksPR:2016b}). Here, we present another proof of this fact with the following slightly more general statement:
\begin{theorem_C}
Let $\mathfrak{F}$ be a codimension one CMC foliation of a compact (without boundary) Riemannian manifold $M^{n+1}$. Assume that there is $K_0\geq 0$ such that the Ricci curvature of $M$ is bounded from below by $-nK_0$. Then, for each leaf $L$ of $\mathfrak{F}$, the mean curvature $H_L$ of $L$ satisfies $|H_L|\leq \sqrt{K_0}$. Furthermore, if $\mathfrak{F}$  contains a leaf $L$ whose absolute mean curvature is $|H_L|=\sqrt{K_0}$, then either $K_0=0$ and all the leaves of $\mathfrak{F}$ are totally geodesic, or $K_0>0$ and there is a totally umbilical leaf in $\mathfrak{F}$.
\end{theorem_C}

\section{Preliminaries}\label{sec:preliminaries}

In this Section, we introduce some basic facts and notations that will appear in the paper.

Here, the Riemannian manifolds are assumed to be connected and without boundary and the foliations are assumed to be $C^2$ smooth.

Let $M^{n+1}$ be an $(n+1)$-dimensional Riemannian manifold endowed with a {\bf Riemannian metric $g_M=\sum{\omega_A^2}$} and  $\mathfrak{F}$ is a foliation of codimension one on $M$.

For a given point $p\in M$ we can choose an orthonormal frame $\{e_1,\cdots,e_n,e_{n+1}\}$ defined around $p$ such that the vectors $e_1,\cdots,e_n$ are tangent to the leaves of $\mathcal{F}$ and $e_{n+1}$ is normal to them. Taking the correspondent dual coframe
$$\{\omega_1,\cdots,\omega_n,\omega_{n+1}\},$$ 
the  {\bf structure equations} on  $M$ are given by 

\begin{equation}
	d\omega_A=\sum_{B=1}^{n+1}{\omega_B\wedge \omega_{BA}}, \ \ \omega_{AB}+\omega_{BA}=0
\end{equation}

\begin{equation}
	d\omega_{AB}=\sum_{C=1}^{n+1}{\omega_{AC}\wedge \omega_{CB}}+\Omega_{AB},
\end{equation}
where
\begin{equation}
	\Omega_{AB}=-\frac{1}{2}\sum_{C,D=1}^{n+1}{ R_{ABCD}\omega_{C}\wedge \omega_{D}}, \ \ R_{ABCD}+R_{ABDC}=0.
\end{equation}
The {\bf Ricci curvature} in the direction  $e_{n+1}$  is
\begin{equation}
	\ric(e_{n+1})=\sum_{i=1}^{n}{g_M(R(e_{n+1}, e_i) e_{n+1},e_i)}.
\end{equation}

Let  $\nabla$ be the {\bf Levi-Civita connection} on $M$.
Then for any tangent field $X$, we get
\begin{equation}
	\nabla_X{e_A}=\sum_{B=1}^{n+1}{\omega_{AB}(X) e_B}.
\end{equation}

Now let $\theta_A$ and $\theta_{AB}$ denote the restrictions of forms $\omega_A$ and $\omega_{AB}$ to the tangent vectors of the leaves of $\mathcal{F}$. Then it is obvious that
\begin{equation}
	\theta_{n+1}=0 \ \ \mbox{e} \ \ \theta_{i}=\omega_{i}.
\end{equation}

Since $\theta_{n+1}=0$, we obtain from the structure equations  

$$0=d\theta_{n+1}=\sum_{B=1}^{n+1}{\theta_B\wedge \theta_{B n+1}}=\sum_{i=1}^n{\theta_{i}\wedge \theta_{i n+1}}.$$

By {\bf Cartan's equation}, we have
\begin{equation}
	\theta_{n+1i}=-\sum_{j=1}^{n}{h_{ij}\theta_j}, \ \ h_{ij}=h_{ji}.
\end{equation}

The {\bf second fundamental form  $\mathrm{I\!I}$} of the leaves is then given by
\begin{equation}
	\mathrm{I\!I}=\sum_{i=1}^n{\theta_i\otimes\theta_{in+1}}=\sum_{i,j=1}^{n}{h_{ij}\theta_i\otimes\theta_{j}}.
\end{equation}

and its norm is 
$$\left\|\mathrm{I\!I}\right\|^2=\sum_{i,j=1}^{n} h_{ij}^{2}.$$

The {\bf mean curvature vector} is  

\begin{equation}
	\vec{H}=\frac{1}{n}tr(A)e_{n+1},
\end{equation}
where  $A$ is the {\bf Weingarten operator} and the {\bf mean curvature  function} is  $H=\frac{1}{n}tr(A)$.

Observe that the sign of $H$ depends on the choice of $e_{n+1}$. If $N$ is a unitary vector field normal to the leaves of $\mathcal{F}$, we can choose an adapted frame on an open set in such a way that $N=e_{n+1}$. The mean curvature of the leaf is exactly the mean curvature in the direction of $N$.

The  {\bf divergent of a vector field}  $V$ is defined  locally over  $M$ by 
\begin{equation}
	\Div(V)=\sum_{A=1}^{n+1}{g_M(\nabla_{e_A}V,e_A)}.
\end{equation}
For a vector field tangent to the leaves of  $\mathcal{F}$ the divergent along the leaves can be computed by
\begin{equation}
	\Div_L(V)=\sum_{i=1}^{n}{g_M(\nabla_{e_i}V,e_i)}.
\end{equation}
Barbosa et al. in \cite{BarbosaKO-1991} found an equation that relates the foliation  with the ambient, more precisely, they obtained the following.
\begin{proposition} \label{propequation}
	Let $\mathcal{F}$ be a codimension one foliation by hypersurfaces on a Riemannian manifold $M$ and let $N$ be a unit field normal to the leaves of $\mathcal{F}$ on some open set $U$ of $M$. Then on $U$, we have
	\begin{subequations}
		\begin{align}
			\label{divN} \Div N & = -nH;\\
			\label{divL}  \Div_L(X) & =-nN(H)+\left\|\mathrm{I\!I}\right\|^2+\ric(N)+\|X\|^2;\\
			\label{divX} \Div X & = \Div_L X -\|X\|^2,
		\end{align}
	\end{subequations}
	where $H$ is the mean curvature in the direction $N$ and $X=\nabla_N N$.
\end{proposition}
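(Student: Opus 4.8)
The plan is to fix, around a point of $U$, the adapted orthonormal frame $\{e_1,\dots,e_n,e_{n+1}\}$ of the Preliminaries with $e_{n+1}=N$, and to read off from Cartan's equation the basic identity $\nabla_{e_j}N=-\sum_i h_{ij}e_i=-Ae_j$; in particular $g_M(\nabla_{e_j}N,e_i)=-h_{ij}$. I would also use repeatedly that, since $|N|\equiv 1$, the field $X=\nabla_N N$ satisfies $g_M(X,N)=\tfrac12 N(|N|^2)=0$, so $X$ is tangent to the leaves. Equation \eqref{divN} is then immediate: $\Div N=\sum_{A}g_M(\nabla_{e_A}N,e_A)=\sum_{i=1}^n g_M(\nabla_{e_i}N,e_i)+g_M(\nabla_N N,N)$, where the last term vanishes because $X\perp N$, and the remaining sum equals $-\sum_i h_{ii}=-\tr A=-nH$.

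Equation \eqref{divX} is equally short. Splitting the ambient divergence into its leaf part and its normal part, $\Div X=\Div_L X+g_M(\nabla_N X,N)$, and since $g_M(X,N)=0$ one gets $g_M(\nabla_N X,N)=N\,g_M(X,N)-g_M(X,\nabla_N N)=-\|X\|^2$, which is exactly the claimed identity.

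The substance is \eqref{divL}. I would start from $\Div_L X=\sum_{j=1}^n g_M(\nabla_{e_j}\nabla_N N,e_j)$ and introduce the curvature tensor via the Ricci identity $\nabla_{e_j}\nabla_N N=R(e_j,N)N+\nabla_N\nabla_{e_j}N+\nabla_{[e_j,N]}N$. Summing the inner products with $e_j$ then splits $\Div_L X$ into three groups. The curvature group $\sum_j g_M(R(e_j,N)N,e_j)$ is, after using the symmetries of $R$ and the sign convention fixed by the structure equations, precisely $\ric(N)$. For the second group I substitute $\nabla_{e_j}N=-\sum_k h_{kj}e_k$, differentiate along $N$, and observe that the cross terms carry the factor $g_M(\nabla_N e_k,e_j)$, which is antisymmetric in $(k,j)$ by metric compatibility, while $h_{kj}$ is symmetric; hence those terms cancel and only $-N(\sum_j h_{jj})=-nN(H)$ survives. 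For the third group I compute $[e_j,N]=\nabla_{e_j}N-\nabla_N e_j$ and separate its tangential and normal parts: the normal part is $g_M([e_j,N],N)\,N$, whose coefficient is exactly the $j$-th component of $X$ and which contributes $\|X\|^2$, while the tangential part produces $\|\mathrm{I\!I}\|^2$ up to one more symmetric-against-antisymmetric sum that vanishes. Adding the three groups yields \eqref{divL}.

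The main obstacle is bookkeeping rather than conceptual: one must pin down the curvature sign convention so that the curvature group comes out as $+\ric(N)$ and not its negative, and one must correctly identify, in several places, the sums of a symmetric tensor against an antisymmetric one that silently vanish, together with the normal component of $[e_j,N]$ responsible for the $\|X\|^2$ term. A convenient sanity check is that \eqref{divN}, \eqref{divL} and \eqref{divX} together reproduce the identity $\Div(\nabla_N N)=-nN(H)+\ric(N)+\|\mathrm{I\!I}\|^2$, which is the hypersurface-orthogonal Raychaudhuri equation for the unit field $N$.
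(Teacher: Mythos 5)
Your argument is correct, but it cannot be compared with a proof in the paper itself: the paper states Proposition \ref{propequation} without proof, quoting it from Barbosa--Kenmotsu--Oshikiri \cite{BarbosaKO-1991}, and Section \ref{sec:preliminaries} sets up precisely the moving-frames formalism (structure equations, Cartan's lemma, connection forms) in which that original derivation is carried out. Your route is the invariant one: splitting $\Div$ into leaf and normal parts gives \eqref{divN} and \eqref{divX} immediately from $g_M(X,N)=0$, and \eqref{divL} follows from contracting the commutation identity $\nabla_{e_j}\nabla_N N = R(e_j,N)N + \nabla_N\nabla_{e_j}N + \nabla_{[e_j,N]}N$ against $e_j$. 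The bookkeeping you describe does close: in the second group only $-N\bigl(\sum_j h_{jj}\bigr)=-nN(H)$ survives because $g_M(\nabla_N e_k,e_j)$ is antisymmetric in $(k,j)$ while $h_{kj}$ is symmetric; in the third group the normal component $g_M([e_j,N],N)=g_M(e_j,X)$ yields $\|X\|^2$ and the tangential component yields $\|\mathrm{I\!I}\|^2$ after the same symmetric-against-antisymmetric cancellation; and the curvature group equals $\ric(N)$ once the sign of $R_{ABCD}$ is fixed as in the structure equation $\Omega_{AB}=-\frac{1}{2}\sum_{C,D} R_{ABCD}\,\omega_C\wedge\omega_D$. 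Compared with the frame computation of \cite{BarbosaKO-1991}, your derivation is self-contained and avoids exterior calculus, at the cost of tracking the curvature sign convention by hand --- exactly the pitfall you flag, and the one place where a careless reader would get $-\ric(N)$ instead of $+\ric(N)$. Your closing sanity check is also apt: summing \eqref{divL} and \eqref{divX} gives $\Div(\nabla_N N)=-nN(H)+\|\mathrm{I\!I}\|^2+\ric(N)$, which is the identity actually invoked in the proofs of Theorems B and C.
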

%


\begin{definition}
Let $M$ be a Riemannian manifold. We say that $\mathfrak{F}$ is a {\bf foliation of constant mean curvature} (or {\bf CMC foliation}) on $M$ if each leaf $L\in \mathfrak{F}$ is a hypersurface of constant mean curvature (note that the mean curvature possibly varies from leaf to leaf). We say that the foliation $\mathfrak{F}$ is a
{\bf minimal foliation} if each leaf $L\in \mathfrak{F}$ is a minimal hypersurface. 

\end{definition} 

The {\bf Riemannian volume} in an $n$-dimensional Riemannian manifold is the $n$-dimensional Hausdorff measure determined by its Riemannian metric.

\begin{definition}
Let $M$ be a connected complete Riemannian manifold and $B(p,r)$ be the geodesic ball centred in $p\in M$ with radius $r$. 
The {\bf volume entropy of $M$} is 
$$
\mu_M=\limsup\limits_{r\to +\infty}\frac{\ln\mbox{vol}_M\left( B(p,r) \right)}{r},$$
 where $\mbox{vol}_M(B)$ denotes the Riemannian volume in $M$ of $B$. The {\bf lower volume entropy of $M$} is 
$$
\underline{\mu_M}=\liminf\limits_{r\to +\infty}\frac{\ln\mbox{vol}_M\left( B(p,r) \right)}{r}.$$
\begin{itemize}
 \item We say that $M$ has {\bf polynomial volume growth} if there are a point $p\in M$, a non-negative integer $d$ and $a,b>0$ such that 
$$\mbox{vol}_M\left( B(p,r) \right)\leq ar^d+b,$$
for all $r>0$.
\item We say that $M$ has {\bf zero volume entropy} or {\bf subexponential volume growth} if $\mu_M=0$.
\item We say that $M$ has {\bf zero lower volume entropy} if $\underline{\mu_M}=0$.
\end{itemize}

\end{definition}
Note that the choice of $p$ in the above concepts is irrelevant. Note also that ``polynomial volume growth'' $\Rightarrow$ ``zero volume entropy'' $\Rightarrow$ ``zero lower volume entropy''.

\section{The main results}

\subsection{Proof of Theorem A}
\begin{theorem_A}\label{thm:inf_mean_curv_bounded_ricci}
Let $\mathfrak{F}$ be a CMC foliation of a complete Riemannian manifold $M^{n+1}$ with Ricci curvature bounded from below by $-nK_0\leq 0$. Suppose that the mean curvature function $H\colon M\rightarrow \mathbb{R}$, which associates to each point the value of the mean curvature of the leaf of $\mathfrak{F}$ that contains that point, does not change sign on $M$. Then, $\sqrt{K_0}\geq \inf\limits_{p\in M}|H(p)|$.
\end{theorem_A}
\begin{proof}
We may assume that $M$ is a complete orientable Riemannian manifold and that $\mathfrak{F}$ is transversely orientable, otherwise we will work on the orientable double covering of $M$. We then have on $M$ a globally defined unit vector field $N$ normal to the leaves of $\mathfrak{F}$. 

By changing $N$ by $-N$ and $H$ by $-H$, if necessary, we may assume that $H\leq 0$. Let $c\in [0,+\infty)$ such that $\sup_{p\in M} H(p) = -c\leq 0$. Note that $c=\inf_{x\in M}{|H(x)|}$. 

Since $M$ is a complete Riemannian manifold the flow $\theta_t\colon M\rightarrow M$ of the normal vector field $N$ of the foliation is globally defined.
	
	Thus, we define the smooth function $\varphi\colon [0,+\infty)\rightarrow (0,+\infty)$ given by  $$\varphi(t)=\mbox{vol}_M\left(\theta_{t}(B)\right)=\int_{\theta_{t}(B)}dM=\int_{B}\theta^{*}_{t}dM,$$
	where $B:=B(p,r)$ is the geodesic ball centred at $p$ and radius $r$.  
By proceeding like in the proof of Lemma \ref{lemma:volpol}, we obtain that 
	$$\mbox{vol}_M(B(p,t+r))\geq \mbox{vol}_M(\theta_{t}(B))=\varphi(t)=\mbox{vol}_M(B)e^{nc t},\,\,\,\forall \,\,t\geq0.$$
By Bishop–Gromov inequality, we obtain
$$
\mbox{vol}_M(B(p,s))\leq \mbox{vol}_{\tilde M^{n+1}(-K_0)}(B_{\tilde M^{n+1}(-K_0)}(\tilde p,s)),
$$
for all $s>0$, where ${\tilde M^{n+1}(-K_0)}$ is the space form of constant sectional curvature $-K_0$ and $B_{\tilde M^{n+1}(-K_0)}(\tilde p,s)$ is the geodesic ball of ${\tilde M^{n+1}(-K_0)}$ centred at $\tilde p$ and of radius $s$. However, we have that
$$
\mbox{vol}_{\tilde M^{n+1}(-K_0)}\left(B_{\tilde M^{n+1}(-K_0)}(\tilde p,s)\right)=c_n\int_0^s \left(\frac{\sinh(\sqrt{K_0}t)}{\sqrt{K_0}}\right)^ndt,
$$
where $c_n$ is the $n$-dimensional volume of the unit sphere in $\mathbb{R}^{n+1}$ (see \cite[p. 105]{BarbosaKO-1991} or \cite[\S III.4.1]{Chavel:2006}). Thus, by using the L'Hospital rule, we obtain
$$
n\sqrt{K_0}=\lim\limits_{s\to +\infty}\frac{\ln\mbox{vol}_{\tilde M^{n+1}(-K_0)}\left(B_{\tilde M^{n+1}(-K_0)}(\tilde p,s)\right)}{s}.
$$
Now, we need the following result:
\begin{lemma}\label{lemma:volpol}
Let $\mathfrak{F}$ be a transversely oriented codimension one foliation on a complete oriented Riemannian manifold $M^{n+1}$. Let $H\colon M\rightarrow \mathbb{R}$ be the mean curvature function, which associates to each point the value of the mean curvature of the leaf of $\mathfrak{F}$ through that point. Then, $\underline{\mu_M}\geq n \inf\limits_{p\in M}|H(p)|$. Furthermore, if $M^{n+1}$ has zero lower volume entropy, then $\inf\limits_{p\in M}|H(p)|=0$.
\end{lemma}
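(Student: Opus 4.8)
The plan is to use the globally defined unit normal field $N$ (available by the orientability hypotheses) together with the identity $\Div N = -nH$ from Proposition \ref{propequation}, and to measure how the volume of a fixed geodesic ball grows under the flow of $N$. Before doing this I would dispose of the case in which $H$ changes sign: since $M$ is connected and the mean curvature function $H$ is continuous (the foliation being $C^2$), if $H$ assumed both positive and negative values then the intermediate value theorem would force $H$ to vanish at some point, whence $\inf_{p\in M}|H(p)|=0$ and the inequality $\underline{\mu_M}\geq 0$ is trivial. So I may assume $H$ does not change sign, and after replacing $N$ by $-N$ if needed I may take $H\leq 0$; writing $c:=\inf_{p\in M}|H(p)|$, this gives $\Div N = -nH = n|H|\geq nc$ everywhere.

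Next, using completeness, the flow $\theta_t\colon M\to M$ of $N$ is defined for all $t$. Fixing a geodesic ball $B=B(p,r)$, I would study $\varphi(t):=\mathrm{vol}_M(\theta_t(B))=\int_B\theta_t^*\,dM$. Differentiating under the integral and using $\frac{d}{dt}\theta_t^*\,dM=\theta_t^*\mathcal{L}_N\,dM=\theta_t^*\big((\Div N)\,dM\big)$ followed by the change of variables formula yields
\[
\varphi'(t)=\int_{\theta_t(B)}\Div N\,dM=\int_{\theta_t(B)}(-nH)\,dM\geq nc\,\varphi(t).
\]
Integrating this differential inequality gives $\varphi(t)\geq\varphi(0)\,e^{nct}=\mathrm{vol}_M(B)\,e^{nct}$ for all $t\geq 0$.

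The remaining step is geometric. Because $N$ has unit length, each integral curve $s\mapsto\theta_s(q)$ has unit speed, so $d(\theta_t(q),q)\leq t$ and hence $\theta_t(B)\subseteq B(p,r+t)$. Therefore
\[
\mathrm{vol}_M(B(p,r+t))\geq\varphi(t)\geq\mathrm{vol}_M(B)\,e^{nct}.
\]
Taking logarithms, dividing by $r+t$, and passing to the $\liminf$ as $t\to+\infty$ (so that $s=r+t\to+\infty$) gives $\underline{\mu_M}\geq nc=n\inf_{p\in M}|H(p)|$, which is the main claim. The final assertion follows at once: if $\underline{\mu_M}=0$, then $0\geq nc\geq 0$, forcing $\inf_{p\in M}|H(p)|=c=0$.

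I expect the only genuinely delicate points to be the sign bookkeeping and the justification that $\varphi$ is differentiable with $\varphi'(t)=\int_{\theta_t(B)}\Div N\,dM$. The sign issue is precisely why I first isolate the sign-changing case as trivial, so that $\Div N$ has a definite sign and the integral can be bounded below by $nc\,\varphi(t)$; the differentiation of the flowed volume is the standard first-variation computation via the Lie derivative of the volume form, and the rest is a routine Gronwall estimate together with the unit-speed containment of flow lines.
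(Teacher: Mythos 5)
Your proof is correct and follows essentially the same route as the paper's: differentiating the volume $\varphi(t)$ of the flowed ball via $\Div N=-nH$, a Gronwall-type estimate giving $\varphi(t)\geq \mathrm{vol}_M(B)e^{nct}$, and the unit-speed containment $\theta_t(B)\subseteq B(p,r+t)$ before passing to the $\liminf$. Your preliminary reduction via the intermediate value theorem is in fact a small improvement: the paper's own proof simply asserts that $H$ does not change sign (a hypothesis present in Theorem A but absent from the lemma's statement), whereas you justify why that case is trivial, so your version is slightly more complete.
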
	
\begin{proof}[Proof of Lemma \ref{lemma:volpol}]
	
	Since $\mathfrak{F}$ is a transversely oriented foliation on a complete oriented Riemannian manifold $M^{n+1}$, and the mean curvature function $H\colon M\rightarrow \mathbb{R}$, that associates to each point the value of the mean curvature of the leaf of $\mathfrak{F}$ that contains that point, does not change sign on $M$, we can choose the normal vector field $N$ or $-N$ in such a way that $H\leq 0$. Since $M$ is a complete Riemannian manifold the flow $\theta_t\colon M\rightarrow M$ of the normal vector field $N$ of the foliation is globally defined. 
		
	Now, following classical arguments and standard results on geometric flows (see Theorem 5.20 in \cite{GallotHL:2004}, Proposition 16.33 in \cite{Lee:2003}) and following closely the ideas of the proof of Theorem 2.1 in \cite{AliasCN:2021}), we define the smooth function $\varphi\colon [0,+\infty)\rightarrow (0,+\infty)$ by
	$$
	\varphi(t)=\mbox{vol}_M\left(\theta_{t}(B)\right)=\int_{\theta_{t}(B)}dM=\int_{B}\theta^{*}_{t}dM,
	$$
	where $B:=B(p,r)$ is the geodesic ball centered at $p$ with radius $r$.

	Using the compactness of $\overline{B}$ allows us to differentiate under the sign of the integral, we have
	\begin{eqnarray*}\label{eq.ineq_lee}
		\varphi'(t_{0})&=& \dfrac{d}{dt}\bigg|_{t=0}\varphi(t+t_{0})\\
		&=&\dfrac{d}{dt}\bigg|_{t=0}\int_{\theta_{t+t_{0}}(B)}dM\\
		&=&\dfrac{d}{dt}\bigg|_{t=0}\int_{\theta_{t_{0}}(B)}\theta^{*}_{t}dM\nonumber\\
		&=&\int_{\theta_{t_{0}}(B)}\dfrac{d}{dt}\bigg|_{t=0}\theta^{*}_{t}dM \\
		&=&\int_{\theta_{t_{0}}(B)} \mbox{div}(N)dM.
	\end{eqnarray*}
	By Equation \eqref{divN} of the Proposition \ref{propequation}, we have 
	\begin{eqnarray}\label{deriflow}
		\varphi'(t_{0})&=& \int_{\theta_{t_{0}}(B)} (-nH)dM.
	\end{eqnarray}
	
	Now, $c\in [0,+\infty)$ such that $\sup_{p\in M} H(p) = -c \leq 0$. Note that $c=\inf_{x\in M}{|H(x)|}$. Therefore,
	\begin{eqnarray*}\label{eq.desigderive}
		\varphi'(t) &=& \int_{\theta_{t}(B)}\mbox{div}(N)dM\nonumber\\
		& = & \int_{\theta_{t}(B)} (-nH)dM\nonumber\\
		& \geq &(nc)\int_{\theta_{t}(B)}dM\nonumber\\ 
		&=& (nc)\varphi(t),
	\end{eqnarray*}
	for all $t\geq0$. Note that $\varphi'(t)>0,$ for all $t\geq 0$, consequently, $\varphi$ is an increasing function. So, $\varphi(t)\geq \varphi(0)=\mbox{vol}_M(B)>0,$ for all $t\geq0$. Thus, by using the inequality above and integrating the function $\frac{\varphi'(s)}{\varphi(s)}$ over the interval $[0,t]$, we obtain:
	\begin{equation*}\label{Ineq_alias3}
		\int_{0}^{t}\dfrac{\varphi'(s)}{\varphi(s)}ds \geq \int_{0}^{t}(nc)ds.
	\end{equation*}

	Thus, 		
	\begin{equation*}		
		\ln\left(\dfrac{\varphi(t)}{ \mbox{vol}_M(B)}\right) \geq (nc) t.
	\end{equation*}

	Therefore, $\varphi(t)\geq \mbox{vol}_M(B)e^{nc t}$ for all $t\geq 0 $.
	
	Note that $\mbox{vol}_M(B(p,t+r))\geq \mbox{vol}_M(\theta_{t}(B))$, since $\theta_{t}(B)\subset B(p,t+r),$ for all $t\geq0$. Indeed,
	
	\begin{equation}
		\mbox{dist}_M(x,\theta_{t}(x))\leq \int_{0}^{t}\left| \dfrac{d}{ds}\theta_{s}(x) \right|ds=\int_{0}^{t}\| N(\theta_{s}(x))\|ds=t,
	\end{equation}
	where $\mbox{dist}_M(x,\theta_{t}(x))$ the Riemannian distance between $x$ and $\theta_{t}(x)$.
	For all $x\in B$ and by triangular inequality, we have
	$$\mbox{dist}_M(p,\theta_{t}(x))\leq \mbox{dist}_M(x,\theta_{t}(x))+\mbox{dist}_M(p,x)<t+r. $$

	Then 
	$$\mbox{vol}_M(B(p,t+r))\geq \mbox{vol}_M(\theta_{t}(B))=\varphi(t)=\mbox{vol}_M(B)e^{nc t},\,\,\,\forall \,\,t\geq0.$$
Therefore,
$$
\underline{\mu_M}=\liminf\limits_{t\to +\infty}\frac{\ln\mbox{vol}_M\left( B(p,t) \right)}{t}\geq  \liminf\limits_{t\to +\infty}\frac{\ln (\mbox{vol}_M(B)e^{nc t})}{t}=nc.
$$
	
	Finally, if $M$ has zero lower volume entropy, we have that $\underline{\mu_M}=0$.  Thus, $\inf_{x\in M}{|H(x)|}=0$, which finishes the proof of Lemma \ref{lemma:volpol}.
\end{proof}

Therefore, we also have the following equality
$$
nc=\lim\limits_{t\to +\infty}\frac{\ln (\mbox{vol}_M(B)e^{nc t})}{t},
$$
then $nc\leq n\sqrt{K_0}$.

Therefore
$\inf_{x\in M}{|H(x)|}\leq \sqrt{K_0}$.
\end{proof}
In particular, we recover the second main result of Barbosa, Kenmotsu and Oshikiri in \cite{BarbosaKO-1991} (see Theorem 3.8 in \cite{BarbosaKO-1991}).

Note that if $\Sigma^n$ is a Riemannian $n$-manifold that has zero lower volume entropy, then $\Sigma\times \mathbb{R}$ has also zero lower volume entropy. Thus, as a consequence of Lemma \ref{lemma:volpol}, we obtain the following version of \cite[Corollary 1.2]{CoswasckF-2021}.

\begin{corollary}
	Let $\Sigma^n$ be a complete oriented Riemannian $n$-manifold that has zero lower volume entropy. Then, for any graph $\Gamma_f$ over $\Sigma$ that has constant mean curvature $H$, $\Gamma_f$ is minimal and stable hypersurface in $\Sigma\times \mathbb{R}$.
\end{corollary}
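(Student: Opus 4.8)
The plan is to realise $\Gamma_f$ as a single leaf of a CMC foliation of $M := \Sigma\times\mathbb{R}$, use Lemma \ref{lemma:volpol} to force $H=0$, and then deduce stability from a strictly positive Jacobi function produced by the vertical Killing field. First I would observe that vertical translation $T_s\colon (x,\tau)\mapsto (x,\tau+s)$ is an isometry of the product metric and satisfies $T_s(\Gamma_f)=\Gamma_{f+s}$. Hence the family $\{\Gamma_{f+s}\}_{s\in\mathbb{R}}$ is a codimension one foliation $\mathfrak{F}$ of $M$ whose leaves are all isometric to $\Gamma_f$ and therefore share the same constant mean curvature $\pm H$ for a coherently chosen unit normal. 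Choosing the unit normal field $N$ with positive $\partial_\tau$-component makes $\mathfrak{F}$ transversely oriented and turns the mean curvature function $H\colon M\to\mathbb{R}$ into the constant $H$; in particular it does not change sign. Since $\Sigma$ is complete and oriented, so is $M=\Sigma\times\mathbb{R}$, and by the remark preceding this corollary $M$ has zero lower volume entropy.

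Next I would apply the final assertion of Lemma \ref{lemma:volpol} to the transversely oriented foliation $\mathfrak{F}$ on the complete oriented manifold $M$: zero lower volume entropy forces $\inf_{p\in M}|H(p)|=0$. Because the mean curvature function is the constant $|H|$, this immediately gives $H=0$, i.e. $\Gamma_f$ is a minimal hypersurface of $\Sigma\times\mathbb{R}$.

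For stability I would exploit that $Y:=\partial_\tau$ is a Killing field on $M$. Along the minimal hypersurface $\Gamma_f$ its normal component $u:=g_M(Y,N)$ is a classical Jacobi function, i.e. it solves $\Delta_{\Gamma_f}u+(\|\mathrm{I\!I}\|^2+\ric(N))u=0$, since the flow of $Y$ acts by isometries and hence preserves minimality. For a graph one computes $u=(1+\|\nabla^\Sigma f\|^2)^{-1/2}>0$, so $u$ is a strictly positive Jacobi function. Writing an arbitrary test function $\phi\in C_c^\infty(\Gamma_f)$ as $\phi=uv$ (legitimate because $u>0$, so $v=\phi/u$ is again compactly supported) and substituting into the second variation form, an integration by parts using the Jacobi equation collapses the expression to
$$Q(\phi,\phi)=\int_{\Gamma_f}\left(\|\nabla\phi\|^2-(\|\mathrm{I\!I}\|^2+\ric(N))\phi^2\right)\,dL=\int_{\Gamma_f}u^2\|\nabla v\|^2\,dL\geq 0,$$
which is precisely the stability inequality; this is the Fischer-Colbrie and Schoen positive-Jacobi-function criterion.

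The main obstacle is not depth but bookkeeping: I must make certain that all vertical translates genuinely carry the \emph{same} signed mean curvature and that the global normal $N$ can be fixed so that the induced mean curvature function of $\mathfrak{F}$ is a true constant with no sign change, as this is exactly what allows Lemma \ref{lemma:volpol} to apply verbatim. Once minimality is secured this way, the stability step reduces to the standard test-function computation recalled above, with the positivity $u>0$ being guaranteed automatically by the graphical nature of $\Gamma_f$.
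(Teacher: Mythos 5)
Your proposal is correct and follows essentially the same route the paper intends: the paper deduces this corollary from the remark that $\Sigma\times\mathbb{R}$ inherits zero lower volume entropy together with Lemma \ref{lemma:volpol} applied to the foliation by vertical translates $\{\Gamma_{f+s}\}_{s\in\mathbb{R}}$, which is exactly your minimality argument. Your stability step (the positive Jacobi function $u=g_M(\partial_\tau,N)>0$ from the vertical Killing field and the Fischer--Colbrie--Schoen substitution $\phi=uv$) is the standard argument that the paper leaves implicit by citing the corresponding result of Coswosck--Fontenele, so you have simply supplied details the paper omits.
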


We also obtain the following generalisation of Proposition 3.7 in \cite{BarbosaKO-1991}.

\begin{proposition}\label{prop:no_foliation_positive_ricii}
Let $\mathfrak{F}$ be a transversely oriented CMC foliation of an oriented complete Riemannian manifold $M^{n+1}$.   Assume that $M$ is a compact (without boundary) manifold or $\mathfrak{F}$ is an $H$-foliation. Let $N$ be a unit vector field on $M$ orthogonal to $\mathfrak{F}$. Then we have the following:
\begin{enumerate}
    \item [(a)] There is no leaf $L$ of $\mathfrak{F}$ such that $\|\nabla_N N\|\big|_L\in L^1(L)$ and $Ric(N)|_L$ is a non-negative function that is not identically zero. In particular, $\mathfrak{F}$ has no compact leaf $L$ such that $Ric(N)|_L$ is a non-negative function that is not identically zero.
    \item [(b)] If a leaf $L$ of $\mathfrak{F}$ satisfies $\|\nabla_N N\|\big|_L\in L^1(L)$ and $Ric(N)|_L\geq 0$, then $L$ is a totally geodesic leaf and $\|\nabla_N N\|\big|_L\equiv Ric(N)|_L \equiv 0$.
\end{enumerate}
\end{proposition}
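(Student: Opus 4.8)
The plan is to reduce the whole statement to part (b) and to the single identity \eqref{divL}. First I would note that (a) follows immediately from (b): if some leaf $L$ satisfied $\|\nabla_N N\|\big|_L \in L^1(L)$ together with $\ric(N)|_L \geq 0$ not identically zero, then (b) would force $\ric(N)|_L \equiv 0$, a contradiction. For the ``in particular'' clause, a compact leaf automatically has $\|\nabla_N N\|\big|_L \in L^1(L)$ and is complete, so (a) applies to it. Thus all the content sits in proving (b).

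For (b) I would restrict attention to the leaf $L$ and set $X=\nabla_N N$. Since $g_M(\nabla_N N,N)=\tfrac12 N(g_M(N,N))=0$, the field $X$ is tangent to $L$, so $\Div_L X$ is intrinsic to $L$ and \eqref{divL} reads $\Div_L X = -nN(H)+\|\mathrm{I\!I}\|^2+\ric(N)+\|X\|^2$ on $L$. The three terms $\|\mathrm{I\!I}\|^2$, $\ric(N)$ and $\|X\|^2$ are all non-negative (using $\ric(N)|_L\geq 0$). The strategy is then to show that the right-hand side has a sign and integrates to zero over $L$, which forces each summand to vanish, i.e. $\mathrm{I\!I}\equiv 0$ (totally geodesic), $\|\nabla_N N\|\big|_L\equiv 0$ and $\ric(N)|_L\equiv 0$, exactly the conclusion of (b).

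For the integration step I would argue as follows. If $L$ is compact (no boundary), the divergence theorem gives $\int_L \Div_L X\,dL=0$. If $L$ is noncompact, I would invoke completeness of $L$ (leaves of a $C^2$ foliation of a compact manifold are complete in the induced metric, and in the $H$-foliation case one works on a complete leaf) together with the hypothesis $\|\nabla_N N\|\big|_L\in L^1(L)$ and a Yau--Caminha type divergence theorem: a vector field of integrable norm on a complete manifold whose divergence does not change sign must have vanishing divergence. In either case, once the right-hand side of \eqref{divL} is non-negative and has integral zero, we get $\Div_L X\equiv 0$ and every non-negative summand is identically zero, proving (b).

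The crux is the term $-nN(H)$, which must be neutralised before the right-hand side of \eqref{divL} has a definite sign, and this is exactly where the two global hypotheses enter. If $\mathfrak{F}$ is an $H$-foliation, then $H$ is a global constant, so $N(H)\equiv 0$ and the argument above closes cleanly; I regard this as the robust, fully routine case. The genuinely delicate case is a general CMC foliation of a compact $M$, where $H$ is only leafwise constant and $N(H)$ need not vanish: the concentric-circle foliation of $\mathbb{R}^2\setminus\{0\}$ shows a non-totally-geodesic leaf satisfying the local hypotheses precisely because $N(H)\neq 0$ (and indeed that $M$ is neither compact nor an $H$-foliation, so it is excluded). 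I expect controlling $N(H)$ in the compact case to be the main obstacle. The natural mechanism is global: on a compact $M$ the function $H$ attains its extrema on leaves where $\nabla H=0$, hence $N(H)\equiv 0$ there, and one expects that the monotone transversal behaviour of $H$ needed to spoil the sign of $\Div_L X$ is ruled out by compactness together with the structural results of Oshikiri~\cite{Oshikiri-1981}. Making this elimination of $N(H)$ precise for an arbitrary leaf of a compact ambient is the step I would spend the most effort on.
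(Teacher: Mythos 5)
Your proposal, in every step you actually carry out, follows the same route as the paper's proof: restrict $X=\nabla_N N$ to the leaf (it is tangent, as you note), use \eqref{divL} to write $\Div_L X$ as a sum of non-negative terms once $N(H)$ is neutralised, and conclude $\Div_L X\equiv 0$ — hence the vanishing of each summand — from an $L^1$ divergence theorem. The ``Yau--Caminha type divergence theorem'' you invoke is exactly the tool the paper cites, namely \cite[Proposition 1]{CamargoCS:2010}, which the paper applies uniformly to the (complete) leaf; your split into compact leaf (ordinary divergence theorem) versus noncompact leaf is equivalent. Your reduction of (a) to (b) is a harmless cosmetic difference: the paper proves (a) by contradiction, running the same computation and using that ${\rm Ric}(N)>0$ on some open subset of $L$ to contradict $\Div_L\nabla_N N\equiv 0$.

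The step you leave open — eliminating $-nN(H)$ for an arbitrary leaf when $M$ is merely compact and $\mathfrak{F}$ is CMC but not an $H$-foliation — is a genuine incompleteness in your writeup, and your concentric-circles example correctly shows the term cannot simply be discarded. But you should know that the paper does not close this step either: its proof writes $\Div_L \nabla_N N=\|\nabla_N N\|^2+\left\|\mathrm{I\!I}\right\|^2+{\rm Ric}(N)$, silently omitting $-nN(H)$ from \eqref{divL}. That omission is justified exactly when $N(H)|_L\equiv 0$, which holds for an $H$-foliation (where $H$ is a global constant), but for a CMC foliation of a compact manifold $H$ is only leafwise constant and nothing forces its normal derivative to vanish on the particular leaf in question: the proposition quantifies over \emph{all} leaves satisfying the $L^1$ and Ricci hypotheses, not just leaves where $|H|$ is extremal (the device used in the proofs of Theorems B and C, via Proposition 2.1 of \cite{BarbosaKO-1991}, is unavailable here). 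So your diagnosis of where the difficulty lies is accurate; the mechanism you hoped to extract for the compact case is not supplied by the paper, whose proof is airtight only in the $H$-foliation case — precisely the case you completed.
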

\begin{proof}
Let us prove item (a).
Assume by contradiction that there is a leaf $L$ of $\mathfrak{F}$ such that $\|\nabla_N N\|\big|_L\in L^1(L)$ and $Ric(N)|_L$ is a non-negative function that is not identically zero. Then, there is an open subset $U$ of $L$ such that $Ric(N)|_U>0$.

Thus, in points of $L$, we have
$$
\Div_L \nabla_N N=\|\nabla_N N\|^2+\left\|\mathrm{I\!I}\right\|^2+{\rm Ric}(N)\geq 0
$$
and in points of $U$, we have
$$
\Div_L \nabla_N N=\|\nabla_N N\|^2+\left\|\mathrm{I\!I}\right\|^2+{\rm Ric}(N)>0.
$$
By \cite[Proposition 1]{CamargoCS:2010}, $\Div_L \nabla_N N\equiv 0$, which is a contradiction. 

Therefore, there is no such leaf $L$ of $\mathfrak{F}$.

Now, let us prove item (b). Let $L$ be a leaf of $\mathfrak{F}$ that satisfies $\|\nabla_N N\|\big|_L\in L^1(L)$ and $Ric(N)|_L\geq 0$.
Then, in points of $L$, we have
$$
\Div_L \nabla_N N=\|\nabla_N N\|^2+\left\|\mathrm{I\!I}\right\|^2+{\rm Ric}(N)\geq 0.
$$
By \cite[Proposition 1]{CamargoCS:2010}, $\Div_L \nabla_N N\equiv 0$. Then, $\|\nabla_N N\|\equiv \|\mathrm{I\!I}\|\equiv {\rm Ric}(N)\equiv 0$ on $L$.
\end{proof}

\subsection{Proof of Theorem B}

\begin{theorem_B}\label{thm:main_bounded_ricci_compact}
Let $\mathfrak{F}$ be a CMC foliation of a compact (without boundary) Riemannian manifold $M^{n+1}$. Assume that there is $K_0\geq 0$ such that ${\rm Ric}\geq -nK_0$. Suppose that each leaf $L$ of $\mathfrak{F}$ has constant mean curvature $H_L$ such that $|H_L|\geq \sqrt{K_0}$. Then $|H_L|\equiv \sqrt{K_0}$ for any leaf $L$ of $\mathfrak{F}$ and each leaf of $\mathfrak{F}$ is totally umbilical. 
\end{theorem_B}
\begin{proof}
After possibly lifting to the double covering of $M$, we will assume that $M$ is oriented and also that any codimension one CMC foliation of $M$ under consideration is transversely oriented.
Let $N$ be a unit vector field on $M$ orthogonal to $\mathfrak{F}$. 
Let $H\colon M\rightarrow \mathbb{R}$ be the function defined as $H=-\frac{1}{n}\Div (N)$.

Let $L$ be a leaf of $\mathfrak{F}$ such that $|H_L|$ is the maximum of the function $|H|$. 
Changing $H$ by $-H$ and $N$ by $-N$, if necessary, we may assume that $|H_L|$ is the maximum of the function $H$. Thus, $N(H)=0$ on $L$.

Assume by contradiction that $|H_L|> \sqrt{K_0}$. 
Thus, we have two cases to consider:

\noindent Case 1. All the leaves of $\mathfrak{F}$ have the same constant mean curvature $H_L$. In this case, $N(H)\equiv 0$. Then
$$
{\rm div} (\nabla_N N)=\left\|\mathrm{I\!I}\right\|^2+\mbox{Ric}(N).
$$
By the Divergence Theorem, we have
$$
0=\int_M(\left\|\mathrm{I\!I}\right\|^2+\mbox{Ric}(N))\geq \int_M n(H_L^2-K_0)\geq n(H_L^2-K_0){\rm vol} (M)>0,
$$
which is a contradiction.

\noindent Case 2. The leaves of $\mathfrak{F}$ do not have the same constant mean curvature. By Proposition 2.1 in \cite{BarbosaKO-1991}, we may assume that $L$ is a compact leaf. Since $N(H)=0$ on $L$, we have
$$
{\rm div}_L (\nabla_N N)=\left\|\mathrm{I\!I}\right\|^2+\mbox{Ric}(N)+\|\nabla_N N\|^2
$$
on $L$. By the Divergence Theorem, we have
$$
0=\int_L(\left\|\mathrm{I\!I}\right\|^2+\mbox{Ric}(N)+\|\nabla_N N\|^2)\geq \int_L n(H_L^2-K_0)\geq n(H_L^2-K_0){\rm vol} (L)>0,
$$
which is a contradiction.

Thus, in any case, we obtain a contradiction. Therefore, $|H_L|= \sqrt{K_0}$, and thus $|H|\equiv \sqrt{K_0}$. In particular, $N(H)\equiv 0$ on $M$.
Then
$$
{\rm div} (\nabla_N N)=\left\|\mathrm{I\!I}\right\|^2+\mbox{Ric}(N).
$$
By the Divergence Theorem again, we have
$$
0=\int_M(\left\|\mathrm{I\!I}\right\|^2+\mbox{Ric}(N))\geq \int_M n(H_L^2-K_0)\geq n(H_L^2-K_0){\rm vol} (M)=0,
$$
This implies that $\left\|\mathrm{I\!I}\right\|^2\equiv nH_L^2$, $\mbox{Ric}(N)\equiv -nK_0$ and ${\rm div} (\nabla_N N)\equiv 0$ on $M$. By Cauchy's inequality, all the leaves are totally umbilical.
\end{proof}

We obtain the following.
\begin{proposition}
    Let $\Sigma^n$ be a complete oriented Riemannian $n$-manifold with Ricci curvature bounded from below by $-(n-1)K_0$($\leq 0$). Let $f\colon \Sigma\to \mathbb R$ be a smooth function such that its graph $\Gamma_f$ has constant mean curvature $H\geq \sqrt{K_0}$. Assume that the normal vector field $N$ of $\Gamma_f$ satisfies $\|\nabla_N N\|^2\in L^1(\Gamma_f)$. Then $\Gamma_f$ is totally umbilical hypersurface in $\Sigma\times \mathbb{R}$.
\end{proposition}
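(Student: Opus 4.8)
The plan is to exhibit $\Gamma_f$ as a single leaf of a globally defined CMC foliation of $\Sigma\times\mathbb R$ and then run the divergence argument of Theorem B and Proposition \ref{prop:no_foliation_positive_ricii} on this one (noncompact) leaf. First I would note that for every $c\in\mathbb R$ the vertical translation $(x,t)\mapsto(x,t+c)$ is an isometry of the Riemannian product $\Sigma\times\mathbb R$ taking $\Gamma_f$ to $\Gamma_{f+c}$; consequently $\{\Gamma_{f+c}\}_{c\in\mathbb R}$ is a transversely oriented codimension one foliation $\mathfrak F$ of $\Sigma\times\mathbb R$ whose leaves all have the \emph{same} constant mean curvature $H$. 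Hence the mean curvature function of $\mathfrak F$ is the constant $H$, so $N(H)\equiv 0$ for the global unit normal $N$, and $\Gamma_f$ (a graph over the complete manifold $\Sigma$) is a complete leaf.

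Next I would record the curvature input. For the product $M=\Sigma\times\mathbb R$ one has ${\rm Ric}_M(v)={\rm Ric}_\Sigma(v_\Sigma)$, where $v_\Sigma$ denotes the component of a unit vector $v$ tangent to $\Sigma$; together with ${\rm Ric}_\Sigma\geq-(n-1)K_0$ and $K_0\geq 0$ this gives ${\rm Ric}(N)\geq-(n-1)K_0\geq -nK_0$. Substituting $N(H)\equiv 0$ into \eqref{divL} yields, on $\Gamma_f$,
$$\Div_L(\nabla_N N)=\left\|\mathrm{I\!I}\right\|^2+{\rm Ric}(N)+\|\nabla_N N\|^2 .$$
Using the Cauchy--Schwarz bound $\left\|\mathrm{I\!I}\right\|^2\geq nH^2$, the curvature bound above, and $H\geq\sqrt{K_0}$ (so that $H^2\geq K_0$), I obtain
$$\Div_L(\nabla_N N)\geq n(H^2-K_0)+\|\nabla_N N\|^2\geq 0,$$
so $\Div_L(\nabla_N N)$ does not change sign on the complete leaf $\Gamma_f$.

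The crux is to upgrade this pointwise inequality to a pointwise identity in spite of the noncompactness of $\Gamma_f$, and this is where the hypothesis $\|\nabla_N N\|^2\in L^1(\Gamma_f)$ enters. I would invoke the Caminha-type divergence lemma \cite[Proposition 1]{CamargoCS:2010} already used in Proposition \ref{prop:no_foliation_positive_ricii}: a vector field on a complete manifold whose divergence does not change sign and which is suitably integrable must have identically vanishing divergence. Matching the integrability $\|\nabla_N N\|^2\in L^1(\Gamma_f)$ (that is, $\nabla_N N\in L^2(\Gamma_f)$) to the precise hypotheses of this lemma is the delicate point, and I expect it to be the main obstacle; granting it, I conclude $\Div_L(\nabla_N N)\equiv 0$ on $\Gamma_f$.

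Finally I would read off umbilicity from the equality case. Writing the vanishing divergence as a sum of nonnegative terms,
$$0=\left(\left\|\mathrm{I\!I}\right\|^2-nH^2\right)+\left({\rm Ric}(N)+nK_0\right)+n\left(H^2-K_0\right)+\|\nabla_N N\|^2,$$
each summand is nonnegative, so each vanishes. In particular $\left\|\mathrm{I\!I}\right\|^2=nH^2$, which is exactly the equality case of Cauchy--Schwarz for the shape operator; hence all principal curvatures of $\Gamma_f$ coincide and $\Gamma_f$ is totally umbilical in $\Sigma\times\mathbb R$, as claimed.
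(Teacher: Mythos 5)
Your overall strategy is exactly the one the paper's placement of this Proposition suggests: it is stated immediately after Theorem B with no separate proof, and the evident intended argument is the one you give --- foliate $\Sigma\times\mathbb R$ by the vertical translates $\Gamma_{f+c}$, observe that all leaves have the same constant mean curvature so that $N(H)\equiv 0$, feed this into \eqref{divL}, and then replace the Divergence Theorem (unavailable on the noncompact leaf) by the Caminha-type result \cite[Proposition 1]{CamargoCS:2010}, exactly as the paper does in Proposition \ref{prop:no_foliation_positive_ricii}. Your construction of the foliation, the bound ${\rm Ric}(N)\geq -(n-1)K_0\geq -nK_0$ in the product, and the equality-case decomposition at the end are all correct (in fact your decomposition shows more: equality forces $K_0=0$ and $\Gamma_f$ totally geodesic).

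The genuine gap is precisely the step you flag and then ``grant'': \cite[Proposition 1]{CamargoCS:2010} requires $\|\nabla_N N\|\in L^1(\Gamma_f)$, i.e. $\int_{\Gamma_f}\|\nabla_N N\|\,dA<\infty$, whereas the hypothesis here is $\|\nabla_N N\|^2\in L^1(\Gamma_f)$. These two conditions are incomparable on a manifold of infinite volume, and $\Gamma_f$ has infinite volume whenever $\Sigma$ has (the graph parametrization does not decrease the volume element), so $L^2$-integrability of $\nabla_N N$ does not yield the $L^1$-integrability the lemma needs. Nor can the gap be bridged by the standard cutoff argument: testing ${\rm div}_L(\nabla_N N)\geq 0$ against $\phi_R^2$ gives a term bounded by $2\left(\int_{B_{2R}\setminus B_R}\|\nabla_N N\|^2\right)^{1/2}\left(\int|\nabla\phi_R|^2\right)^{1/2}$, and the second factor is only $O\bigl(R^{-1}{\rm vol}(B_{2R})^{1/2}\bigr)$, so one would additionally need quadratic volume growth of the leaf, which is not assumed. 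So, as written, the crucial analytic step of your proof is unjustified; it is not a routine matching of hypotheses but the actual crux. To be fair, the mismatch appears to be inherited from the paper itself: in Proposition \ref{prop:no_foliation_positive_ricii} the integrability assumption is $\|\nabla_N N\|\big|_L\in L^1(L)$, which is what Caminha's lemma demands, so the squared exponent in the present statement reads like a misprint. If the hypothesis is taken to be $\|\nabla_N N\|\in L^1(\Gamma_f)$ (or if one adds that $\Gamma_f$ has finite volume, in which case Cauchy--Schwarz gives $L^2\subset L^1$), your argument closes completely; with the hypothesis as literally stated, an additional idea would be required.
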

	
\begin{corollary}
	Let $\Sigma^n$ be a compact (without boundary) oriented Riemannian $n$-manifold with Ricci curvature bounded from below by $-(n-1)K_0$($\leq 0$). Let $f\colon \Sigma\to \mathbb R$ be a smooth function such that its graph has constant mean curvature $H\geq \sqrt{K_0}$. Then $\Gamma_f$ is totally umbilical hypersurface in $\Sigma\times \mathbb{R}$.
\end{corollary}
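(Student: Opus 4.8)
The plan is to deduce the Corollary directly from the Proposition that immediately precedes it. Comparing the two hypotheses, the Corollary imposes the same Ricci lower bound $\mathrm{Ric}_\Sigma\geq -(n-1)K_0$ and the same constant-mean-curvature condition $H\geq\sqrt{K_0}$ on the graph $\Gamma_f$, and it merely strengthens ``complete'' to ``compact (without boundary).'' The single hypothesis of the Proposition not already listed in the Corollary is the integrability assumption $\|\nabla_N N\|^2\in L^1(\Gamma_f)$. Thus the whole task reduces to verifying that this integrability is automatic once $\Sigma$ is compact.

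First I would record the elementary reductions. A compact manifold without boundary is complete, so the completeness hypothesis of the Proposition is met. Since $\Sigma$ is compact and $f$ is smooth, the graph $\Gamma_f=\{(x,f(x)):x\in\Sigma\}$ is a compact embedded hypersurface of $\Sigma\times\mathbb{R}$, diffeomorphic to $\Sigma$ via the projection. As in the setup of the Proposition I regard $\Gamma_f$ as a leaf of the CMC foliation of $\Sigma\times\mathbb{R}$ by the vertical translates $\Gamma_{f+s}$; since vertical translation is an isometry of $\Sigma\times\mathbb{R}$, every leaf has the same constant mean curvature $H$, so this is even an $H$-foliation, with globally defined unit normal $N=(\partial_t-\nabla f)/\sqrt{1+|\nabla f|^2}$. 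This $N$ is smooth on all of $\Sigma\times\mathbb{R}$, hence $\nabla_N N$ is a smooth vector field and $\|\nabla_N N\|^2$ is a continuous function on the compact manifold $\Gamma_f$. A continuous function on a compact Riemannian manifold is integrable, so $\|\nabla_N N\|^2\in L^1(\Gamma_f)$. With all hypotheses of the Proposition now verified, applying it yields that $\Gamma_f$ is totally umbilical, which is exactly the desired conclusion.

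There is no serious obstacle here: the entire content is the observation that compactness supplies the $L^1$ condition for free, so the main ``step'' is simply checking that the Proposition applies. For orientation it is worth noting that the hypotheses are in fact very rigid in the compact case. Since $\partial_t$ is a parallel (Killing) field on $\Sigma\times\mathbb{R}$, a direct computation gives $\mathrm{div}_{\Gamma_f}(\partial_t^{\top})=nH\,\langle\partial_t,N\rangle$, where $\partial_t^{\top}$ is the tangential part of $\partial_t$ along $\Gamma_f$. Integrating over the closed manifold $\Gamma_f$ forces $nH\int_{\Gamma_f}\langle\partial_t,N\rangle=0$; as $\langle\partial_t,N\rangle=1/\sqrt{1+|\nabla f|^2}>0$ everywhere, this yields $H=0$, and then $0=H\geq\sqrt{K_0}$ forces $K_0=0$. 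Hence the only regime in which the hypotheses can be met is $K_0=0$ with $\Gamma_f$ minimal, in which case the Proposition upgrades ``totally umbilical'' to ``totally geodesic.'' I would include this remark purely as motivation; the proof itself needs nothing beyond the reduction to the Proposition.
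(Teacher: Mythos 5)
Your reduction is correct and is exactly the intended derivation: the Corollary follows from the preceding Proposition once one observes that compactness of $\Sigma$ (hence of $\Gamma_f$, which is diffeomorphic to $\Sigma$) makes $\|\nabla_N N\|^2$ a continuous function on a compact manifold, so the $L^1$ hypothesis holds automatically, with $N$ extended smoothly via the foliation by vertical translates. Your closing flux remark is also sound (and in line with the Bessa--Montenegro reference in the paper's bibliography): integrating $\mathrm{div}_{\Gamma_f}(\partial_t^{\top})=nH\langle\partial_t,N\rangle$ over the closed graph forces $H=0$ and hence $K_0=0$, so the Corollary has nontrivial content only in the minimal, nonnegative-Ricci regime, where the conclusion upgrades to totally geodesic.
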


\subsection{Proof of Theorem C}

In this Subsection, we give a positive answer to Conjecture \ref{conjecture:control_mean_curv_general}, under the additional condition that the ambient space is a compact (without boundary) manifold. 
\begin{theorem_C}\label{thm:main_bounded_sec_curv_compact}
Let $\mathfrak{F}$ be a CMC foliation of a compact (without boundary) Riemannian manifold $M^{n+1}$. Assume that there is $K_0\geq 0$ such that the Ricci curvature of $M$ is bounded from below by $-nK_0$. Then, for each leaf $L$ of $\mathfrak{F}$, the mean curvature $H_L$ of $L$ satisfies $|H_L|\leq \sqrt{K_0}$. Furthermore, if $\mathfrak{F}$  contains a leaf $L$ whose absolute mean curvature is $|H_L|=\sqrt{K_0}$, then either $K_0=0$ and all the leaves of $\mathfrak{F}$ are totally geodesic, or $K_0>0$ and there is a totally umbilical compact leaf in $\mathfrak{F}$.
\end{theorem_C}
\begin{proof}
After possibly lifting to the
two-sheeted covering of $M$, we will assume that $M$ is oriented and also that any codimension one CMC foliation of $M$ under consideration is transversely oriented.

Let $N$ be a unit vector field on $M$ orthogonal to $\mathfrak{F}$. 
Let $H\colon M\rightarrow \mathbb{R}$ be the function defined as $H=-\frac{1}{n}\Div (N)$.

Let $L$ be a leaf of $\mathfrak{F}$ such that $|H_L|$ is the maximum of the function $|H|$. 
Changing $H$ by $-H$ and $N$ by $-N$, if necessary, we may assume that $|H_L|$ is the maximum of the function $H$. Thus, $N(H)=0$ on $L$.

Assume by contradiction that $|H_L|> \sqrt{K_0}$. 

By Theorem B, there is a leaf $\tilde L$ such that $|H_{\tilde L}|<\sqrt{K_0}$. In particular, $H$ is not a constant function. By Proposition 2.1 in \cite{BarbosaKO-1991}, we may assume that $L$ is a compact leaf. Since $N(H)=0$ on $L$, we have
$$
{\rm div}_L (\nabla_N N)=\left\|\mathrm{I\!I}\right\|^2+\mbox{Ric}(N)+\|\nabla_N N\|^2
$$
on $L$. By the Divergence Theorem, we have
$$
0=\int_L(\left\|\mathrm{I\!I}\right\|^2+\mbox{Ric}(N)+\|\nabla_N N\|^2)\geq \int_L n(H_L^2-K_0)\geq n(H_L^2-K_0){\rm vol} (L)>0,
$$
which is a contradiction.

Therefore, $|H_L|\leq \sqrt{K_0}$ for all leaf $L$ of $\mathfrak{F}$.

Now, assume that $\mathfrak{F}$  contains a leaf $L$ whose absolute mean curvature is $|H_L|=\sqrt{K_0}$.

If $K_0=0$, then by Theorem B, all the leaves of $\mathfrak{F}$ are totally geodesic.
If $K_0>0$, we have two cases: (i) $|H|\equiv \sqrt{K_0}$, then by Theorem B again, all the leaves of $\mathfrak{F}$ are totally umbilical; (ii) $H$ is not a constant function, then by Proposition 2.1 in \cite{BarbosaKO-1991}, we may assume that $L$ is a compact leaf, and by proceeding as in the proof of Theorem B, we obtain that $L$ is totally umbilical. 
\end{proof}

\noindent {\bf Acknowledgements.} The authors would like to thank Alexandre Fernandes for his interest in this manuscript, for listening patiently to our first ideas on this research, and for uncountable suggestions.

\end{document}